\newcommand{\E}{\mathbb{E}}
\newcommand{\Pb}{\mathbb{P}}
\newcommand{\Z}{\mathbb{Z}}
\theoremstyle{plain}
\newtheorem{lemma}{Lemma}
\newtheorem{cor}[lemma]{Corollary}
\newtheorem{obs}[lemma]{Observation}
\newtheorem{theo}[lemma]{Theorem}
\theoremstyle{definition}
\newtheorem*{claim*}{Claim}
\theoremstyle{remark}
\newtheorem{question}{Question}
\begin{document}

\tolerance 5000
\parskip 4pt
\parindent 0pt
%\setvruler
%\encodingtest \\

\title[Escape of resources in distributed clustering processes]
{Escape of resources \\ in distributed clustering processes } %\footnotetext{\hspace{-6mm}2010 AMS Subject Classifications: 60K35, 68M14.
%  \\ Keywords: clustering processes, random spanning tree.
%}}

\keywords{clustering proces, random spanning tree} \subjclass[2000]{60K35,
68M14}

\author[van den Berg]{J.\ van den Berg}
\address[J. van den Berg]{CWI and VU University Amsterdam}
\email{J.van.den.Berg@cwi.nl}

\author[Hil\'ario]{M.\ R.\ Hil\'ario}
\address[M.\ R.\ Hil\'ario]{IMPA, Rio de Janeiro}
\email{marcelo@impa.br}

\author[Holroyd]{Alexander E.\ Holroyd}
\address[A.\ E.\ Holroyd]{Microsoft Research and University of British Columbia}
\email{holroyd at math.ubc.ca}

\thanks{Funded in part by CNPq grants 140532/2007-2 (MRH) and NSERC (AEH)}

%\author{J. van den Berg\footnotemark[1], M. R. %Hil\'ario\footnotemark[2], A. E. Holroyd\footnotemark[3] \\
%  \small \\
%  \small \footnotemark[1] CWI and VU University Amsterdam; email: %J.van.den.Berg@cwi.nl \\
%  \small \footnotemark[2] IMPA, Rio de Janeiro; email: marcelo@impa.br %\\
%  \small \footnotemark[3] Microsoft Research and University of British %Columbia; email: holroyd at math.ubc.ca
%  }

\date{23 July 2010}

\begin{abstract}
In a distributed clustering algorithm introduced by Coffman, Courtois,
Gilbert and Piret \cite{coffman91}, each vertex of $\mathbb{Z}^d$ receives an
initial amount of a resource, and, at each iteration, transfers all of its
resource to the neighboring vertex which currently holds the maximum amount
of resource.  In \cite{hlrnss} it was shown that, if the distribution of the
initial quantities of resource is invariant under lattice translations, then
the flow of resource at each vertex eventually stops almost surely, thus
solving a problem posed in \cite{berg91}.  In this article we prove the
existence of translation-invariant initial distributions for which resources
nevertheless escape to infinity, in the sense that the the final amount of
resource at a given vertex is strictly smaller in expectation than the
initial amount. This answers a question posed in \cite{hlrnss}.
\end{abstract}

\maketitle

\section{Introduction}
\label{sec:intro}

\subsection{Definitions and statement of the main result}

Consider, for $d \geq 1$, the $d$-dimensional integer lattice. This is the graph with vertex
set $\Z^d$, and edge set comprising all pairs of vertices $(x,y)$ ($=(y,x)$) with $|x-y| = 1$.
Here $| \cdot |$ denotes
the $1$-norm. We use the notation $\Z^d$ for this graph as well as for its vertex set. It will be clear from the context which of the two is meant.

The following model for `\textit{distributed clustering}' was introduced by Coffman, Courtois, Gilbert and Piret \cite{coffman91}.
To each vertex $x$ of the lattice $\Z^d$, we assign a random
nonnegative number $C_0(x) \in[0,\infty]$
which we regard as the initial amount of a `resource'
placed at $x$ at time $0$.  (The family $( C_0(x); x \in \Z^d )$ is not necessarily assumed independent).
Then we define a discrete-time evolution in which,
at each step, each vertex transfers its resource to the `richest' neighbouring vertex.

More precisely, the evolution is defined recursively
 as follows. Suppose that, at time $n$, the \textit{amount of
 resource} at each vertex $x$ is $C_n(x)$.
 Let $N(x) = \{y \in \mathbb{Z}^d \, : \, |x-y| \leq 1 \}$
 be the \textit{neighbourhood} of $x$ (note that it includes $x$ itself) and define
 $M_n (x) = \left\{y \in N(x) \, : \, ~ C_n(y) = \max_{z \in N(x)} C_n(z)\right\}$.
 Now let $v_n(x)$ be a vertex chosen uniformly at random in $M_n (x)$,
 independently for each $x$, and take:
 \begin{equation*}
 a_n(x) = \left\{
 \begin{array}{ll}
 x, & \text{if } C_n(x) = 0 \\
 v_n(x), & \text{if } C_n(x) > 0.
 \end{array}
 \right.
 \end{equation*}
 Finally, define
$$C_{n+1}(x) := \sum_{y \, : \, a_n(y) =x} C_n(y).$$

 For a fixed vertex $x$, the random variable $C_0(x)$ will be called the
 \textit{initial amount of resource} at $x$, and the family
 $\left( C_0(x); x \in \Z^d \right)$ will be called the \textit{initial configuration}.
 Analogously, $\left( C_n(x); x \in \Z^d \right)$ will be called the
 \textit{configuration at time} $n$. Note that $a_n(x)$ is the vertex to which the resources
 located at $x$ at time $n$ (if any) will be transferred during the $(n+1)$-th step
 of the evolution. We say that there is a \textit{tie} in $x$ at time $n$ if
 $C_n(x) > 0$ and the cardinality of $M_n(x)$ is strictly greater than one.
 In case this occurs, $a_n(x)$ is chosen uniformly at random among the
 vertices around $x$ that maximize $C_n$.
 Note that, apart from those possible tie breaks, all
 the randomness is contained in the initial configuration.
 As soon as a vertex has zero resource, its resource remains zero forever.
Also note that, when two or more vertices
 transfer their resources to the same vertex, these resources are
 added up. Thus this algorithm models a clustering process
 in the lattice starting from a disordered initial configuration.

 For a fixed vertex $x$, we use the notation $C_{\infty}(x)$ for $\lim_{n \to \infty} C_n(x)$
 in case this limit exists.
We write $\mathbb{E}$
 for expectation with respect to the underlying probability measure.

Our main result is the following theorem. The proof is given in
Section~\ref{sec:esca_mass}.
 \begin{theo}
 \label{theo:non_cons}
 Let $d \ge 2$. There exists a translation-invariant distribution for
 the initial configuration $\left(C_0(x); x \in \Z^d\right)$ such that, for each
 $x \in \Z^d$,
 \begin{equation} \label{eq-main}
 \mathbb{E}\left[C_\infty (x)\right] < \mathbb{E}\left[C_0(x) \right].
 \end{equation}
 \end{theo}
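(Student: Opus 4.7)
My starting observation is that for every finite $n$, translation invariance of the law together with the local conservation identity $C_{n+1}(x)=\sum_{y:\,a_n(y)=x}C_n(y)$ force $\E[C_n(0)]=\E[C_0(0)]$: shifting in the sum over $y$ converts $\E[C_n(y)\mathbf{1}_{a_n(y)=0}]$ into $\E[C_n(0)\mathbf{1}_{a_n(0)=-y}]$, and these sum to $\E[C_n(0)]$ since $a_n(0)$ is always some vertex. Hence \eqref{eq-main} is equivalent to failure of uniform integrability of $C_n(0)$ as $n\to\infty$: mass carried by the dynamics must drift off to spatial infinity along rare, heavy-tailed events whose contribution to the expectation vanishes in the limit. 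Any construction must exhibit such a heavy-tailed escape mechanism.

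The deterministic model of escape is the infinite eastward staircase $C_0(ke_1)=2^k$ for $k\ge 0$, zero elsewhere: since every site's unique richest neighbour is its east-neighbour, the configuration shifts rigidly eastward at unit speed, giving $C_\infty\equiv 0$ despite positive initial mass. To randomise this into a translation-invariant law on $\Z^d$ ($d\ge 2$) with finite expected mass per site, I would, independently at each $(x,L)\in\Z^d\times\N$, declare the pair a ``seed'' with probability $p\,\nu(L)$, and at each seed add the mass pattern $1,2,\dots,2^{L-1}$ along the east-going line of $L$ sites starting at $x$, summing contributions from all seeds. For any $\nu$ with tails thinner than $2^{-L}$ (say $\nu(L)\propto 3^{-L}$) this gives $\E[C_0(0)]=p\sum_L\nu(L)(2^L-1)<\infty$.

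To deduce \eqref{eq-main} I would, for each $L$, introduce the ``good event'' $G_L$ that the origin is the start of a seed of length $L$ and no other seed lies in a box of side $cL$ around the staircase; for $p$ small, $\Pb(G_L)\ge c'p\,\nu(L)$. On $G_L$ the dynamics on the box decouples from the ambient configuration for the first $L$ steps, the isolated-staircase analysis applies, and the entire initial mass on the staircase collapses eastward, so $C_L(0)=0$ on $G_L$. The task is then to promote this finite-scale transport into genuine escape: a multi-scale nesting of such events over scales $L, 2L, 4L,\dots$ is the natural vehicle, since it forces the mass originating at the origin to be pushed arbitrarily far east along a cascade of successively longer staircases, never to return.

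The main technical obstacle I anticipate is that each isolated good event $G_L$ on its own merely redistributes mass within a finite ball, and by the translation-invariance calculation above contributes equally to $\E[C_0(0)]$ and to $\E[C_\infty(0)]$. Producing a strict deficit requires using the heavy-tailed nature of $\nu$ (roughly, $\sum L\,\nu(L)2^L=\infty$ while $\sum\nu(L)2^L<\infty$, a regime that exists at the borderline) together with a cascade or renormalisation argument showing that, on a positive-probability event, the mass originally at the origin travels irretrievably to infinity. Implementing this cascade in a translation-invariant way while respecting the finite-mass constraint, and coupling the true dynamics to the idealised isolated-staircase picture accurately enough to quantify the deficit, is the step I regard as the principal difficulty.
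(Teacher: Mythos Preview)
Your opening mass-transport identity $\E[C_n(0)]=\E[C_0(0)]$ is correct, but it does \emph{not} force you into the finite-expectation, uniform-integrability regime you commit to. The theorem as stated places no finiteness constraint on $\E[C_0(0)]$, and the paper exploits this: it builds an initial configuration with $\E[C_0(0)]=\infty$ and $C_\infty(0)=0$ almost surely, so the strict inequality $0<\infty$ is immediate. By insisting on finite expected mass per site you are attempting precisely the question the paper leaves open (its Question~\ref{item5conservation}), and your proposal, as you yourself recognise, does not resolve the cascade step needed to turn finite-range transport into genuine escape.

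The paper's construction is both different and much shorter. It takes a translation-invariant random forest $T$ on $\Z^d$ whose components are one-ended (obtained from the two-dimensional minimal spanning tree, then dilated by $2$ so that adjacent vertices of $T$ in $\Z^d$ are always joined by an edge of $T$), orients each edge toward the unique end, and sets $C_0(x)$ equal to the number of descendants of $x$ in $T$ (zero off $T$). A short induction shows that at every step each occupied vertex has its parent as strict maximiser in its $\Z^d$-neighbourhood, so all resource flows one step toward the end; iterating, every vertex is eventually stripped and $C_\infty\equiv 0$. The descendant-count weighting is exactly what makes the parent always strictly richer than the sum of the children, so no ties occur and the tree structure governs the dynamics deterministically.

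So the concrete gap is twofold: you have imposed an unnecessary constraint ($\E[C_0(0)]<\infty$) that converts a short argument into an open problem, and even within your chosen framework the ``principal difficulty'' you flag---making the multi-scale cascade translation-invariant and coupling it to the true dynamics---is not addressed. If you want to prove the theorem as stated, the one-ended-forest route is the one to take; your staircase idea is a reasonable starting point for the harder finite-mass question, but it is not yet a proof.
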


\subsection{Background and motivation}
Here is some more terminology.  If, for all sufficiently large $n$, we have that
$a_n(x) = x$ and $a_n(y) \neq x$ for all neighbours $y$ of $x$,
then we say that
the flow at $x$ \textit{terminates} after finitely many steps.
%there is \textit{(local) fixation} at $x$.
 In that case, the limit $C_{\infty}(x)$ is attained
 after finitely many iterations and will be called the \textit{final amount
 of resource at} $x$.
 If for all sufficiently large $n$ we have $a_{n+1}(x) = a_n(x)$, then we say that
$x$ eventually transfers its resource to the same fixed vertex.

The following stability questions for this process (formulated here similarly as in \cite{hlrnss})
have been investigated in the literature:

 \begin{question}
 \label{item3stability}
 Does each vertex eventually transfer its resource to the same fixed vertex almost
 surely?
 \end{question}
 \begin{question}
 \label{item1fixation}
 Does the flow at each vertex terminate after finitely many steps almost surely?
 \end{question}
 \begin{question}
 \label{item2conservation}
 If the answer to the previous question is affirmative, is the expected
 final amount of resource of a vertex equal to the expected initial
amount?
 \end{question}

 Of course the answers to the above questions may depend on the assumptions
 made about the distribution of the initial configuration.
 Note that if the answer to Question~\ref{item1fixation} is affirmative, then so is the answer
 to Question~\ref{item3stability}.  In that case, answering
 Question~\ref{item2conservation} is equivalent to answering the question whether
 the resource quantity that started on a given
 vertex will eventually stop moving almost surely. So, informally,
 Question~\ref{item1fixation}
 is related to fixation while Question~\ref{item2conservation} is related to
 conservation.

 Van den Berg and Meester \cite{berg91} considered the case $d=2$ and
 i.i.d.\ initial resource quantities. Using translation-invariance and
 symmetries of the system they proved that
 the answer  to Question~\ref{item3stability} is positive in the case
 that the initial quantities of resource have a continuous
 distribution.
 They also showed that, if the resources are integer valued,
 then Question~\ref{item1fixation} has a positive answer as well.

 Later, van den Berg and Ermakov \cite{berg98} considered again
 i.i.d.\ continuously distributed initial quantities of resource
 on the two-dimensional lattice. Using a percolation approach,
 they were able to relate Questions~\ref{item1fixation} and \ref{item2conservation}
 to a finite (but large) computation. By using
 Monte Carlo simulation, they obtained
 overwhelming evidence that the answer to these questions is
 positive for this case.

 In \cite{hlrnss} it was proved that, for every dimension and every translation-invariant
 distribution of the initial configuration, the answer to Question~\ref{item1fixation} is positive.
 %Their methods can be extended to any graph
 %having a transitive unimodular group of automorphisms
 %under which the initial configuration is invariant.
However, Question~\ref{item2conservation} was left open.
% The question whether the
% resource that started on a fixed vertex will
% eventually stop almost surely remained open.
Our Theorem \ref{theo:non_cons} says that, for some initial distributions in this class,
 the answer to that question is negative.

 The conclusion of Theorem \ref{theo:non_cons} is false for
 $d=1$. To see that, suppose that the probability that
 the resource starting at the origin does not stop after
 finitely many steps is positive. Then, by translation invariance,
 there is, with positive probability, a positive density
 of vertices for which the initial resource will not stop
 after finite time. This implies that, with positive probability,
 there are infinitely many steps at which resource enters or leaves the origin,
contradicting the fixation result of \cite{hlrnss} mentioned in the previous paragraph.
 This argument can be generalized for example to any graph of the form
 %some essentially
 %one-dimensional graphs such as
 $\Z \times G$, where $G$ is a finite vertex-transitive graph.
%obtained by identifying
(For such graphs translation-invariance is replaced with automorphism-invariance).

In order to prove Theorem \ref{theo:non_cons}, we will
 construct a random collection (forest) of one-ended trees,
which is  embedded in $\Z^d$, in a translation-invariant way, and
 then assign resource quantities to the vertices
 in such a way that, during the evolution, each resource
 follows the unique infinite self-avoiding path to infinity in the forest.
 In Section \ref{sec:unif_span_tree} we present a short
 discussion about the existence of certain random
 forests on $\Z^d$.
 In Section \ref{sec:esca_mass}, Theorem \ref{theo:non_cons}
 is proved.
 In Section \ref{sec:conc_rema} we present some concluding
 remarks and open questions.

 \section{Translation-invariant forests on $\Z^d$}
 \label{sec:unif_span_tree}

 Let $G$ be an infinite graph. A \textit{forest} of $G$ is a subgraph of
$G$ that
 has no cycles. A \textit{tree} is a connected forest.
 A subgraph \textit{spans} $G$ if it contains
%at least one edge incident to each vertex of $G$.
 every vertex of $G$.
 A \textit{spanning forest} (respectively \textit{tree}) on $G$ is a subgraph of
 $G$ that is a forest (respectively a tree) and that spans $G$.
 The \textit{leaves} of a forest $T$ are the vertices of $T$ that only have one neighbor
 in the forest. The \textit{number of ends} of a tree is the number of distinct self-avoiding
 infinite paths starting from a given vertex.
 A tree is said to be \textit{one-ended} if it has one end.

 We choose the $d$-dimensional
 integer lattice as the underlying graph.
 For this choice, the literature provides several
 constructions of random spanning forests with translation-invariant distributions, for example, the uniform
 spanning tree \cite{pemantle91}, and the minimal spanning tree
 \cite{alexander95}.
 To be explicit, we briefly discuss one construction,
 based on the two-dimensional minimal spanning tree.

 Let $E$ be the set of edges of the lattice $\Z^2$, and let
 $\left\{U_e; e \in E \right\}$ be a family of independent random
 variables distributed uniformly in the interval $[0,1]$.
 For each cycle of the lattice, delete the edge having
 the maximum $U$-value on the cycle.
 The resulting random graph is called \textit{(free) minimal spanning forest}
 and is known to be almost surely a one-ended tree
 which is invariant and ergodic under lattice translations (see \cite{lyons06}).

 For $d>2$, we can use the two-dimensional minimal
 spanning forest to construct a random
 forest in $\Z^d$ of which the distribution is invariant under lattice translations, and of which
every component is one-ended.
 We regard $\Z^d$ as $\Z^{2} \times \Z^{d-2}$ and in
 each `\textit{layer}' $\Z^2 \times \{z\}$ (where $z$ runs over $\Z^{d-2}$)
 we embed an independent copy of
 the two-dimensional minimal spanning tree $T_z$.
 The resulting subgraph of $\Z^d$ is a translation-invariant
 random spanning forest with one-ended components.
 This gives the following lemma.

 \begin{lemma}
 \label{lemma:span_tree}
 For each $d \geq 2$ there exists a translation-invariant random spanning
 forest on $\Z^d$, of which each connected component
is one-ended almost surely.
 \end{lemma}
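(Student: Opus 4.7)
The plan is to verify in detail the construction already sketched in the paragraph before the lemma: for $d=2$, take the two-dimensional free minimal spanning forest directly; for $d\ge 3$, build a spanning forest of $\Z^d$ out of independent copies of the two-dimensional construction, one per two-dimensional slice.

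For $d=2$ I would simply invoke the result cited from \cite{lyons06}: the free minimal spanning forest produced by the uniform edge weights $\{U_e\}_{e\in E}$ is, almost surely, a spanning tree of $\Z^2$ with exactly one end, and its distribution is invariant (and even ergodic) under lattice translations. This is exactly the conclusion of the lemma in the base case.

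For $d\ge 3$, identify $\Z^d$ with $\Z^2\times\Z^{d-2}$ and, for each $z\in\Z^{d-2}$, place on the slice $\Z^2\times\{z\}$ an independent copy $T_z$ of the two-dimensional free minimal spanning forest, constructed from an independent family of uniform edge weights on that slice. Set $T:=\bigcup_z T_z$. Three properties have to be checked. First, that $T$ is a spanning forest of $\Z^d$: every vertex $(x,z)$ lies in the slice $\Z^2\times\{z\}$ and is therefore met by $T_z$, while any cycle in $T$ must lie in a single slice (since $T$ contains no edge joining distinct slices) and so would contradict the tree property of $T_z$. Second, that $T$ is translation-invariant: a lattice translation by $(u,w)\in \Z^2\times\Z^{d-2}$ permutes the slices via $z\mapsto z+w$ and translates each slice by $u$ within itself, and this action preserves the law of the i.i.d.\ family $(T_z)$ because each individual marginal is translation-invariant on $\Z^2$. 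Third, that each component of $T$ is one-ended a.s.: since no edge of $T$ crosses between slices, the components of $T$ are exactly the trees $T_z$, each of which is one-ended with probability one; a countable intersection of these full-probability events still has probability one.

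I do not expect a substantial obstacle here; the lemma is essentially a bookkeeping exercise on top of the two-dimensional result from \cite{lyons06}. The only point that deserves explicit mention is the observation that the components of $T$ coincide with the per-slice trees $T_z$, since this is what allows the one-endedness of the two-dimensional construction to be lifted verbatim to $\Z^d$ without any additional argument.
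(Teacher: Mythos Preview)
Your proposal is correct and follows exactly the construction the paper sketches before the lemma: the two-dimensional free minimal spanning forest for $d=2$, and independent copies of it on the slices $\Z^2\times\{z\}$ for $d\ge 3$. You have simply filled in the routine verifications (spanning, acyclicity, translation-invariance, and that the components coincide with the per-slice trees) that the paper leaves implicit.
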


 \begin{cor}
 \label{cor:span_tree}
 For each $d \geq 2$ there exists a translation-invariant random forest $T$
 on $\Z^d$, for which the following two properties hold almost surely. \\
(i) Every connected component of $T$ is one-ended. \\
(ii) Every edge of $\Z^2$ of which both endpoints are in $T$ is an edge of $T$.
 \end{cor}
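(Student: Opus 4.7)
The plan is to modify the spanning forest $F$ of Lemma~\ref{lemma:span_tree} by rescaling by a factor of $2$ and subdividing each edge, then composing with a uniform random shift to restore full translation invariance.

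Explicitly, I would let $T_0$ be the subgraph of $\Z^d$ obtained from $F$ by sending each vertex $x$ to $2x$ and replacing each edge $\{x, x+e\}$ of $F$ (with $e$ a unit vector of $\Z^d$) by the length-$2$ path $2x,\; 2x+e,\; 2x+2e$. A short parity check shows that distinct edges of $F$ give rise to paths in $\Z^d$ that only share main vertices at common $F$-endpoints, so $T_0$ is a topological subdivision of $F$; hence $T_0$ is a forest, and each component inherits the ends structure of its preimage in $F$, so each component is one-ended almost surely. This handles~(i). The distribution of $T_0$ is a priori only $2\Z^d$-invariant, so to promote to full $\Z^d$-translation invariance I would take $S$ uniform on $\{0,1\}^d$ independently of $F$ and set $T := T_0 + S$, using that $S + s \bmod 2$ is again uniform on $\{0,1\}^d$ for every $s \in \Z^d$ together with the $2\Z^d$-invariance of $T_0$.

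The main step is to verify property~(ii). The key observation is that every vertex of $V(T_0)$ has residue modulo $2$ in $R := \{\mathbf{0}, e_1, \ldots, e_d\}$: main vertices have residue $\mathbf{0}$, and the intermediate vertex $2x + e$ has residue on the axis of $e$. For $u, v \in V(T_0)$ with $|u - v| = 1$, a case analysis on these residues rules out two main vertices (their difference is even in every coordinate), two intermediates on the same axis (the difference lies in $2\Z^d$), and two intermediates on different axes (the difference has two odd coordinates, so magnitude at least $2$). Thus exactly one of $u, v$ is main and the other an intermediate, and a direct coordinate check then pins them down as two consecutive vertices on a single length-$2$ subdivided path, giving $\{u, v\} \in E(T_0)$, hence~(ii) for $T_0$ and thus for $T$.

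The main obstacle is precisely this residue case analysis for~(ii); once the mod-$2$ classification of vertex types is set up, each subcase reduces to a short parity observation, and the choice of scaling factor $2$ is exactly what is needed to prevent intermediates of different edges from becoming lattice-neighbours.
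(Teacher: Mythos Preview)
Your proposal is correct and follows essentially the same approach as the paper: scale the spanning forest of Lemma~\ref{lemma:span_tree} by a factor of $2$, subdivide each edge, and then apply a uniform random shift in $\{0,1\}^d$ to recover full $\Z^d$-invariance. The only difference is that you spell out the mod-$2$ residue case analysis for property~(ii) in detail, whereas the paper simply asserts that any two lattice-adjacent vertices of the rescaled forest are joined by one of its edges.
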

 \begin{proof}
 Let $H$ be a spanning forest as in Lemma \ref{lemma:span_tree}
 and write $F$ for the set of its edges, and $V$ for the set of its
vertices. Let $\widetilde H$ be the forest with
 vertex set $\{2x \, : \, x \in V \} \cup \left\{ x+y \, : \, (x,y) \in F \right\}$ and
 edge set
 $E = \left\{ (2x, x+y)\, : \, (x,y)\in F \right\}$.
 Informally, $\widetilde H$ corresponds
 to the forest which is obtained when $H$ is scaled up by a factor $2$. Thus to
 each edge $(x,y)$ of $H$, there correspond two edges, $(2x, x+y)$ and $(x+y, 2y)$, in
 $\widetilde H$.
 Note that $\widetilde H$ is a random forest which is invariant under translations
of $2\Z^d$, and which has the property that every pair $x$, $y$ of its
vertices satisfying $|x - y| = 1$ is connected by an edge of $\widetilde H$.
To restore invariance under {\em all} translations of $\Z^d,$ let $W$ be a
uniformly random element of the discrete cube $\{0,1\}^d$, independent of
$\widetilde H$, and
 set $T = {\widetilde H} + W$.
 \end{proof}

 \section{Proof of main result}
 \label{sec:esca_mass}

In this section we fix $d \geq 2$.
 We will prove Theorem \ref{theo:non_cons} by giving an explicit
 construction of an initial configuration
 $(C_0(x), x \in \Z^d)$ whose distribution is
 translation-invariant and for which \eqref{eq-main} holds.

 Let $T$ be a random forest on $\Z^d$ as given by
 Corollary \ref{cor:span_tree}.
 For vertices $x$ and $y$ of $T$ we write $x \sim y$ if $(x,y)$ is an edge of $T$. We define a (random) partial order $\leq$ on $\Z^d$ by setting $y \leq x$ if and only if $x$ and $y$ are vertices of $T$ and
 $x$ belongs to the unique infinite self-avoiding path in $T$ starting at $y$.
 If $y \leq x$ we say that $x$ is an
 \textit{ancestor} of $y$ and that $y$ is a \textit{descendant} of $x$.
 If $y \leq x$ and $x \sim y$ we say that $x$ is a \textit{parent}
 of $y$. Note that every vertex of $T$ has a unique parent. Moreover, for every vertex $x$ of $T$, exactly one vertex
in $\{y \, : \, y \sim x\}$ is the parent of $x$, and the others are
descendents of $x$.

 We now define, for each $x \in \Z^d$, the initial quantity of resource
 at $x$ by:
 \begin{equation}
 \label{eq:conf_init}
 C_0 (x) = %\left\{
  \begin{cases}%{ll}
   \sum_{y\in\Z^d} \mathbf{1}[y\leq x],   &\text{if } x \in T \\
   0, & \text{otherwise}.
  \end{cases} %\right.
 \end{equation}

 Note that, if $x \in T$, then $C_0 (x)$ is the number of descendants of $x$.
 Since every connected component of $T$ is one-ended, it follows from the definitions that this number is finite.
 Also note that, since the distribution of $T$ is invariant under the translations
 of $\Z^d$, so is that of the family $\left( C_0(x); x \in \Z^d \right)$.

 We now define a nested (`decreasing') sequence of forests that will be shown to describe
 the dynamics of resources when $C_0(x)$ is given by (\ref{eq:conf_init}).
For a forest $S$, let $\phi(S)$ denote the forest obtained from
 $S$ by deleting all its leaves.
 Let $T_0 = T$ and, for $n = 1, 2, \ldots $, define inductively
 $T_n = \phi(T_{n-1})$.

The following observation follows easily from the definitions.

\begin{obs}\label{obs-Tn}
Let $y$ be a vertex of $T$, let $x$ be the parent of $y$, and let $n \geq 0$.
Then $x$ is in $T_{n+1}$ if and only
if $y$ is in $T_n$.
\end{obs}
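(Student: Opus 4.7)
The plan is to prove the observation by induction, with an auxiliary lemma doing the main work. The auxiliary lemma I would establish first states that for every $n \geq 0$ and every $v \in T_n$, the parent $p(v)$ of $v$ in $T$ also lies in $T_n$. This follows by induction on $n$: the base case $n = 0$ is immediate since $T_0 = T$; for the inductive step, $v \in T_{n+1}$ means $v \in T_n$ and $v$ is not a leaf of $T_n$, and applying the inductive hypothesis first to $v$ and then to $p(v)$ yields $p(v), p(p(v)) \in T_n$, so $p(v)$ has two distinct neighbors ($v$ and $p(p(v))$) in $T_n$, hence $p(v) \in T_{n+1}$.

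With this lemma in hand, the forward direction is immediate: if $y \in T_n$, then $x = p(y) \in T_n$ and $p(x) \in T_n$ by the lemma, so $y$ and $p(x)$ are two distinct neighbors of $x$ in $T_n$, giving $x \in T_{n+1}$.

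The main obstacle lies in the converse direction. Given $x \in T_{n+1}$, we know $x$ has at least two neighbors in $T_n$; the auxiliary lemma identifies one of them as $p(x)$, forcing at least one child of $x$ to lie in $T_n$. However, when $x$ has multiple children in $T$ this argument only produces \emph{some} child in $T_n$, not the particular $y$ fixed in the statement. I would therefore read the observation as expressing the set-level correspondence $T_{n+1} = p(T_n)$---equivalently, the pair of implications ``if $y \in T_n$ then $p(y) \in T_{n+1}$'' and ``if $x \in T_{n+1}$ then some child of $x$ is in $T_n$''---which is the form that naturally supports tracking how resource quantities are transferred along the parent map in the main proof of Theorem~\ref{theo:non_cons}.
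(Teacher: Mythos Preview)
Your analysis is correct, and in fact you have spotted a genuine imprecision in the paper. The paper offers no proof at all (it just says the observation ``follows easily from the definitions''), so there is no approach to compare; but the literal biconditional is false whenever $x$ has more than one child. Concretely, if $x$ has children $y_1,y_2$ with $y_1$ a leaf of $T$ while $y_2$ has a child, then $x\in T_2$ (its neighbours $y_2$ and $p(x)$ survive in $T_1$) yet $y_1\notin T_1$. Since vertices of $T$ can have several children (in the construction of Corollary~\ref{cor:span_tree} the ``even'' vertices $2x$ inherit the degree of $x$ in the underlying spanning tree), this situation does occur.

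Your auxiliary lemma (parents persist: $v\in T_n\Rightarrow p(v)\in T_n$) and your proof of the implication $y\in T_n\Rightarrow x\in T_{n+1}$ are clean and correct. Your set-level reformulation $T_{n+1}=p(T_n)$, equivalently ``$x\in T_{n+1}$ iff \emph{some} child of $x$ lies in $T_n$'', is exactly what the paper actually uses. In Lemma~\ref{obs-end} one only needs that $x\in T_{n+1}$ forces a chain of $n{+}1$ descendants below $x$, and in the induction step \eqref{eq-a3} of Lemma~\ref{lemm:dyna} one only needs that for $x\in T_{n+1}$ at least one child $y$ lies in $T_n$ (to make the middle inequality strict) together with the persistence of parents (to ensure the terms with $y\notin T_n$ contribute $0$ on both sides). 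So your corrected reading, and your proof of it, are the right fix.
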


\begin{lemma} \label{obs-end}
 For every vertex $x$ in $T$,
 there is a finite index $n_0$ (depending on $x$) such that, for all $n \geq n_0$,
 $x$ does not belong to $T_n$.
\end{lemma}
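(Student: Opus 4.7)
The plan is to induct on $s(x) := |\{y \in T : y \leq x\}|$, the size of the subtree of $T$ rooted at $x$; this is finite because the connected component of $T$ containing $x$ is one-ended. The base case $s(x) = 1$ is immediate: $x$ has no proper descendants, so its only neighbor in $T$ is its parent, $x$ is a leaf of $T_0 = T$, and therefore $x \notin T_n$ for every $n \geq 1$.

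For the inductive step, suppose $s(x) = k \geq 2$ and that the conclusion is known for all vertices with strictly smaller subtree size. Enumerate the children $y_1, \ldots, y_m$ of $x$ in $T$ (nonempty since $s(x) \geq 2$); each satisfies $s(y_i) \leq k-1$, so by the inductive hypothesis there exist integers $n_i$ with $y_i \notin T_n$ for all $n \geq n_i$. Set $M := \max_i n_i$. If $x \notin T_M$, we are done; otherwise I want to conclude that $x$ has become a leaf of $T_M$, whose sole surviving neighbor is its parent, and is therefore removed, giving $x \notin T_{M+1}$.

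The only point requiring care is that $\phi$ only deletes vertices of degree exactly one (not isolated vertices), so I must know that the parent of $x$ still belongs to $T_M$. I would extract this from an auxiliary \emph{ancestor persistence} claim, proved by a separate induction on $n$: if $z \in T_n$, then every ancestor of $z$ in $T$ also lies in $T_n$. For the inductive step, if $z \in T_{n+1}$ and $p$ is an ancestor of $z$, then by the previous step both the parent of $p$ (an ancestor of $z$, existing by one-endedness) and the neighbor of $p$ on the path from $p$ to $z$ (equal to $z$ or itself an ancestor of $z$) lie in $T_n$, giving $p$ degree at least two in $T_n$ and hence $p \in T_{n+1}$. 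Applied to $x$ at step $M$ this forces the parent of $x$ to be in $T_M$, so after discarding the $y_i$ the vertex $x$ has exactly one neighbor left, is a leaf of $T_M$, and is removed. The main obstacle in the argument is precisely the possible isolated-vertex scenario, which this auxiliary claim dispatches cleanly; beyond that the induction on subtree size is routine.
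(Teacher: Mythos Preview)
Your argument is correct. The paper's own proof is much terser: it cites Observation~\ref{obs-Tn} to conclude directly that $n_0(x)$ is at most one plus the number of descendants of $x$---if $x\in T_n$, then stepping down one generation at a time produces a chain of $n$ distinct proper descendants of $x$, and finiteness of the descendant set bounds $n$. Your route is different: you induct on the size $s(x)$ of the subtree below $x$, and you isolate an ancestor-persistence claim (if $z\in T_n$ then every ancestor of $z$ lies in $T_n$) precisely to rule out the possibility that $x$ becomes an isolated vertex, which by the paper's definition of a leaf $\phi$ would \emph{not} delete. The paper's approach is shorter and immediately yields the explicit bound $n_0(x)\le s(x)$; yours is longer but self-contained, and it makes the isolated-vertex issue and its resolution fully explicit rather than leaving them folded into the one-step parent/child statement of Observation~\ref{obs-Tn}.
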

\begin{proof}
By Observation \ref{obs-Tn}, $n_0(x)$ is at most $1$ plus the number of descendents of $x$.
As we mentioned before, this number is finite.
\end{proof}
 \begin{lemma}
 \label{lemm:dyna}
 Suppose that, for all $x$, $C_0 (x)$ is given by (\ref{eq:conf_init}).
 Then for all $n \geq 0$,
 \begin{equation}
 \label{eq:flux_stab}
 C_n (x) %\left\{
  \begin{cases}%{ll}
  > \sum_{y: y \sim x, \, y \leq x} C_n(y),
   & \text{\rm if } x \in  T_n; \\
  = 0, & \text{\rm if } x \notin T_n.
  \end{cases} %\right.
 \end{equation}
 \end{lemma}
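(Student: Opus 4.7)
The plan is to argue by induction on $n$. The base case $n=0$ is immediate from (\ref{eq:conf_init}): for $x\in T$ the descendants of $x$ decompose as $\{x\}$ together with the descendants of each child of $x$, so $C_0(x)=1+\sum_{y\sim x,\, y\leq x}C_0(y)$, while $C_0(x)=0$ for $x\notin T$.

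For the inductive step I would use the hypothesis at time $n$ first to determine the dynamics, and then read off the configuration at time $n+1$. Fix $x\in T_n$. Observation~\ref{obs-Tn} places the parent $p$ of $x$ in $T_{n+1}\subseteq T_n$, so the induction hypothesis applied at $p$ gives $C_n(p)>C_n(x)$, and applied at $x$ gives $C_n(x)>C_n(y)$ for every child $y$ of $x$. By Corollary~\ref{cor:span_tree}(ii) any $\Z^d$-neighbour of $x$ lying in $T$ is a tree-neighbour of $x$, and any neighbour outside $T_n$ carries no resource by the hypothesis. Consequently the maximum of $C_n$ on $N(x)$ is attained uniquely at $p$, so $a_n(x)=p$ with no tie to resolve. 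Vertices outside $T_n$ hold no resource and remain inert.

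From these dynamics, the only resource entering $x\in T_n$ at time $n+1$ comes from the children of $x$ lying in $T_n$, which yields
\[
C_{n+1}(x)=\sum_{y\sim x,\, y\leq x}C_n(y).
\]
A short case check, separating $x\in T_n\setminus T_{n+1}$ (a leaf of $T_n$, with no children in $T_n$) from $x\notin T_n$ (where Corollary~\ref{cor:span_tree}(ii) and Observation~\ref{obs-Tn} rule out any neighbour that could transfer to $x$), gives $C_{n+1}(x)=0$ for every $x\notin T_{n+1}$. Applying the recursion to $x\in T_{n+1}$ and to each of its children then yields
\[
C_{n+1}(x)-\sum_{y\sim x,\, y\leq x}C_{n+1}(y)=\sum_{\substack{y\sim x,\, y\leq x\\ y\in T_n}}\Bigl(C_n(y)-\sum_{z\sim y,\, z\leq y}C_n(z)\Bigr),
\]
and the induction hypothesis makes every bracket on the right strictly positive; since $x\in T_{n+1}$ guarantees at least one child of $x$ in $T_n$, the sum is strictly positive.

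The main obstacle is the bookkeeping in the dynamics step: one needs Observation~\ref{obs-Tn}, Corollary~\ref{cor:span_tree}(ii), and the induction hypothesis working in concert to confirm that the parent of $x$ lies in $T_n$ with strictly larger value, that no $\Z^d$-neighbour in $T\setminus T_n$ interferes, and that no tie arises in $N(x)$. After that is done, the two identities above reduce the desired inequality to the induction hypothesis.
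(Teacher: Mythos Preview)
Your proof is correct and follows essentially the same inductive scheme as the paper: verify the base case from the definition of $C_0$, use Corollary~\ref{cor:span_tree}(ii) together with the induction hypothesis to show that every vertex of $T_n$ sends its resource to its parent, derive the recursion $C_{n+1}(x)=\sum_{y\sim x,\,y\leq x}C_n(y)$, and then iterate to obtain the strict inequality at time $n+1$. The only cosmetic difference is that you organize the final step as a difference identity rather than the paper's chain of (in)equalities, and you spell out the tie-free maximum in $N(x)$ more explicitly; both presentations rely on Observation~\ref{obs-Tn} to guarantee a child of $x$ in $T_n$ when $x\in T_{n+1}$, making the inequality strict.
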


 \begin{proof} We use induction on $n$.
 To verify (\ref{eq:flux_stab}) for $n=0$ we note that, if $x$ belongs to $T_0 (= T)$ then,
 by (\ref{eq:conf_init}),
 \begin{equation*}
 \sum_{\substack{y: y \sim x, \\ y \leq x}} C_0(y) =
 \sum_{\substack{y:y \sim x, \\ y \leq x}} \; \sum_{z \in \Z^d} \mathbf{1}[z \leq y] =
 \sum_{z \in \Z^d \setminus \{x\}} \mathbf{1}[z \leq x] = C_0(x) - 1.
 \end{equation*}

 Now, suppose that (\ref{eq:flux_stab}) holds for a given $n$.
 Since $T$ was taken as in Corollary \ref{cor:span_tree},
 two vertices of $T_n$ which are
 adjacent in $\Z^d$ must be linked by an edge of $T_n$.
By this and \eqref{eq:flux_stab} it follows that, for each vertex $z$ of $T_n$, $a_n(z)$ is
the parent of $z$. Therefore, and because $C_n \equiv 0$ outside $T_n$, we have
\begin{align}
C_{n+1}(x) &= 0  &\text{ if } x \notin T_{n+1}; \label{eq-a1}\\
C_{n+1}(x) &= \sum_{y: y \sim x, \, y \leq x} C_n(y)  &\text{ if } x \in T_{n+1}. \label{eq-a2}
\end{align}

%
%\begin{equation} \label{eq-a1}
%C_{n+1}(x) = 0  \text{ if } x \notin T_{n+1},
%\end{equation}
%and
% \begin{equation} \label{eq-a2}
%C_{n+1}(x) = \sum_{y \sim x, \, y \leq x} C_n(y) \,\, \text{ if } x %\in T_{n+1}.
%\end{equation}

By applying \eqref{eq-a2}, \eqref{eq:flux_stab} and Observation \ref{obs-Tn} (and noting that \eqref{eq-a2} also holds
for $x \in T_n \setminus T_{n+1}$, since then both sides of \eqref{eq-a2} are equal to $0$),
we get, for $x \in T_{n+1}$,

\begin{equation} \label{eq-a3}
C_{n+1}(x) = \sum_{\substack{y:y \sim x,\\ y \leq x}} C_n(y)
> \sum_{\substack{y:y \sim x,\\ y \leq x}} \,\,
\sum_{\substack{z:z \sim y,\\ z \leq y}} C_n(z)
= \sum_{\substack{y:y \sim x,\\ y \leq x}} C_{n+1}(y).
\end{equation}
Now \eqref{eq-a1} and \eqref{eq-a3} complete the induction step, and the proof of Lemma \ref{lemm:dyna}.
\end{proof}

\begin{proof}[Proof of Theorem \ref{theo:non_cons}]  Let the initial configuration be defined as in \eqref{eq:conf_init}.  Let $x \in \Z^d$. By Lemma \ref{lemm:dyna} and Lemma \ref{obs-end},
we have that almost surely $C_n(x) = 0$ for all sufficiently large $n$.
Hence $C_{\infty}(x) = 0$ almost surely. On the other hand, it is clear that $C_0(x) > 0$ with positive probability, and
hence $\E[C_0(x)] > 0$.
\end{proof}

 \section{Concluding remarks and open problems}
 \label{sec:conc_rema}
At the end of the proof of Theorem \ref{theo:non_cons}
we mentioned the obvious fact that $\E[C_0(x)] > 0$ for every $x$. It turns out that this
expectation is even $\infty$. Indeed, we have
\begin{align*}
 \E[C_0(x)] & = \sum_{y \in \Z^2} \Pb[y \leq x]
              = \sum_{y \in \Z^2} \Pb[x+y \leq x] \\
                  & = \sum_{y \in \Z^2} \Pb[x \leq x-y]
                    = \sum_{y \in \Z^2} \Pb[x \leq y ] = \infty,
 \end{align*}
 where the second and forth equality follow
 by relabeling, the third equality follows by
 translation-invariance and
 the last inequality follows from the fact
 that $x$ has infinitely many ancestors
 almost surely.

 We have not been able to construct an example where
 resources escape to infinity but the
 initial amount of resource at a given vertex has finite expectation.
 It is an interesting question whether such
 examples exist.

 In particular, in our construction,
 the initial configuration was chosen in such a way that, almost surely, the induced
 dynamics takes place in a forest with one-ended components, embedded in $\Z^d$ and,
 at each step, the resources are transferred from every vertex with non-zero
 resource to its parent. It is not clear if for every initial configuration with these
properties
 the expectation of the initial amount of resource of a vertex is infinite.
We state these considerations more formally by the following two questions.

 \begin{question}
 \label{item4conservation}
 Suppose that $(C_0(x); x \in \Z^d)$ has a translation-invariant distribution and
 is positive exactly on the vertices of a forest with one-ended components. Furthermore, suppose that
 during the $n$-th step of the evolution, every vertex for which $C_{n-1}(x) > 0$ transfers
 its resource to its parent.
 Is it the case that $\mathbb{E}[C_0(x)] = \infty?$
 \end{question}

 \begin{question}
 \label{item5conservation}
 Does there exist a translation-invariant distribution for the initial configuration for which
 $\E[C_{\infty}(x)] < \E[C_0(x)] < \infty$?
 \end{question}

 A negative answer to Question \ref{item4conservation} would
 yield a positive answer to Question \ref{item5conservation}.

\textbf{Acknowledgments}
 Marcelo Hil\'ario thanks CWI (Amsterdam), Eurandom (Eindhoven)
 and Microsoft Research (Redmond)
 for their hospitality.
 The authors thank Leonardo T. Rolla, Scott Sheffield and Vladas Sidoravicius
 for important comments and suggestions.

\end{document}